\theoremstyle{plain}
\newtheorem{definition}{Definition}[section]
\newtheorem{example}{Example}[section]
\newtheorem{lemma}{Lemma}[section]
\newtheorem{remark}{Remark}[section]
\newtheorem{theorem}{Theorem}[section]
\numberwithin{equation}{section}
\definecolor{lightgrey}{cmyk}{0,0,0,0.30}
\definecolor{darkgrey}{cmyk}{0,0,0,0.70}
\definecolor{purple}{cmyk}{0.45,0.86,0,0}
\definecolor{darkblue}{cmyk}{1.7,0.7,0.3,0}
\definecolor{lightblue}{cmyk}{0.6,0.2,0.1,0}
\definecolor{midblue}{cmyk}{1.1,0.3,0.2,0}
\begin{document}	
\pagecolor{yellow!10!}
	\title[Formula for the local zeta function of a Laurent polynomial]{An explicit formula for the local zeta function of a Laurent polynomial}
	\author{Edwin Le\'{o}n-Cardenal}
	\address{CONACyT Research Fellow -- Centro de Investigaci\'{o}n en Matemáticas \\
		Unidad Zacatecas\\
		Avenida Universidad \#222, Fracc. 
		La Loma, Zacatecas, ZAC. 98068\\
		M\'{e}xico.}
	\email{edwin.leon@cimat.mx}
	\subjclass[2000]{Primary 11S40, 14G10; Secondary 52B20}
	\keywords{Laurent polynomials, Igusa's zeta function, explicit formulae, Newton polytopes, non-degeneracy conditions}
\begin{abstract}
In a recent paper Z{\'u}{\~n}iga-Galindo and the author begun the study of the local zeta functions for Laurent polynomials. In this work we continue this study by giving a very explicit formula for the local zeta function associated to a Laurent polynomial $f$ over a $p-$adic field, when $f$ is weakly non-degenerate with respect to the Newton polytope of $f$ at infinity. 
\end{abstract}
\maketitle
\section{Introduction}

Local zeta functions were introduced in the 60's by Israel Gel'fand and Andr\'e Weil. 
Gel'fand study this functions over $\mathbb{R}$ with the aim of showing 
the existence of fundamental solutions to certain partial differential equations with constant coefficients. Meanwhile Weil studied $p-$adic local zeta functions in order to generalize some results of Siegel about quadratic forms, e.g. the Siegel-Poisson formula, see \cite{De}. Significant contributions have since been made, specially in the last three decades, see e.g. \cites{De,IBook,Nic} and the references therein.

One of the most powerful tools in the characteristic zero wing of the theory is the resolution of singularities. In particular, after the pioneering work of Varchenko \cite{Var}, Newton polyhedra techniques have been extensively employed to study local zeta functions as well as their connections with oscillatory integrals, see e.g. \cites{De,LeVeZu} and the references therein for the Archimedean case, and \cites{DeHoo,VeZu,Zu}, among others, in the non-Archimedean case, including the positive characteristic case. In \cites{LeZu,LeZu2} we began the study of local zeta functions for a Laurent polynomial $f$ over a $p-$adic field. There we introduce a Newton polytope at infinity $\Gamma_\infty$ associated to $f$, together with a non-degeneracy condition in order to show the existence of a meromorphic continuation to the whole complex plane of the local zeta functions for $f$. We also obtain asymptotic expansions for $p-$adic oscillatory integrals attached to Laurent polynomials and give bounds for the size of `tubular neighborhoods' attached to the polynomials. Our work in \cite{LeZu} is closely related with the recent paper \cite{VeZu2}, where the authors study the local zeta functions for meromorphic functions.

The main tool used in \cite{LeZu} for the meromorphic continuation of the local zeta functions is a variation of toric resolution of singularities.  In the classical case of a polynomial function $f$,  one uses the Newton polyhedron of $f$ to construct a 
conical decomposition of the first orthant of $\mathbb{R}^n$ into simple cones, this decomposition is called a fan. Then one construct a toric manifold and a map of it into $\mathbb{R}^n$, which together resolve the singularities of almost all the critical points of $f$. This strategy was carried out successfully in \cite{LeZu} for the case of Laurent polynomials. However, there is a detail that should be pointed out about our construction. In general, the set of generators of the simple fan may contain extra rays coming from the intersection of
$\mathbb{R}_+^n$ with the cones in the original fan, and these extra rays could lead to superfluous candidate poles for the local zeta function, see Example \ref{Ex 2} and the discussion before Example \ref{ex. 1}. 

In this paper we justified the aforementioned fact and shows that it may happen only in dimensions above 2. Further work in the description of this new set of rays could be of some interest. In the second part of this work we go back to the referred construction of  \cite{LeZu}, in particular we review and slightly refine the definition of  the conical partition subordinated to the Newton polytope of a non--degenerate Laurent polynomial $f$. Our goal is to give an explicit formula for 
the local zeta function attached to a character of the group of units of the local ring of $K$ and a non--degenerate Laurent polynomial $f$, see Section \ref{Sect_EF}.  Furthermore, we provide several examples that we hope may shed more light on our approach in \cite{LeZu} and here, to the study of this new type of local zeta functions. 

\section{Newton Polytopes and Non-degeneracy Conditions}\label{Sec1}
\subsection{Newton Polytopes}

We take $\mathbb{R}_{+}:=\{x\in\mathbb{R} \mathbf{ ; }\ x\geqslant0\}$. If 
$\left\langle \cdot,\cdot\right\rangle $ denotes the usual inner product of
$\mathbb{R}^{n}$, we identify  the dual space of $\mathbb{R}^{n}$ with
$\mathbb{R}^{n}$ itself by means of it.

Let $K$ be a local field of characteristic zero. Let
\[
f(x_1,\ldots,x_n)=\sum_{m\in S}c_{m}x^m\in K[x_1^{\pm 1},\ldots,x_n^{\pm 1}],
\]
be a non-constant Laurent polynomial with $S$ a finite subset of $\mathbb{Z}^n$, and $c_{m}\in K\setminus\{0\}$ for all $m\in S$. The set $S$ is called the support
of $f$.  We define the \textit{Newton polytope} $\Gamma_{\infty}\left(  f\right)  :=\Gamma_{\infty}$ \textit{\ of
}$f$\textit{\ at infinity} as the convex hull of $S$ in $\mathbb{R}^{n}$. 
From now on, we assume that $\dim\Gamma_{\infty}=n$.

Let $H$ be the hyperplane $\left\{  x\in\mathbb{R}^{n}\ ;\ \left\langle
a,x\right\rangle =b\right\}  $. Then $H$ determines two closed half-spaces:
\[
H^{+}:=\left\{  x\in\mathbb{R}^{n}\mathbf{;}\left\langle a,x\right\rangle \geq
b\right\}
\quad\text{and}\quad
H^{-}:=\left\{  x\in\mathbb{R}^{n}\mathbf{;}\left\langle a,x\right\rangle \leq
b\right\}  .
\]
We say that $H$ is \textit{a supporting hyperplane} of $\Gamma_{\infty}$, if
$\Gamma_{\infty}\cap H\neq\emptyset$ and $\Gamma_{\infty}\subset H^+$ or $\Gamma_{\infty}\subset H^-$. A \textit{face }of $\Gamma
_{\infty}$ is the intersection of the polytope with a supporting hyperplane. Faces of dimension $0, 1$, and $n-1$ are called vertices, edges and facets, respectively. 
We denote by $Vert(\Gamma_{\infty})$ the set of vertices of \ $\Gamma_{\infty}$. 

Given $a\in\mathbb{R}^{n}$, we define
\begin{equation*}
d(a)=\inf\left\{  \left\langle a,x\right\rangle \mathbf{;}\ x\in\Gamma_{\infty
}\right\}  .
\end{equation*}
In fact $d(a)=\min\left\{  \left\langle a,x\right\rangle \mathbf{;}\ x\in Vert(\Gamma
_{\infty})\right\}  ,$ furthermore $d(a)=\left\langle a,x_{0}\right\rangle $ for some $x_{0}\in
Vert(\Gamma_{\infty})$.

Now, given a supporting hyperplane $H$ of $\Gamma_{\infty}$ containing a facet of $\Gamma_{\infty}$, there exists a
unique vector $a\in \mathbb{Z}^n\setminus\{0\}$ which is orthogonal to $H$ and is directed into the polytope, such a vector is called the \textit{inward} normal to $H$. A vector $a=(a_{1},\ldots,a_{n})\in\mathbb{Z}^{n}$ is called primitive if
$g.c.d.(a_{1},\ldots,a_{n})=1$, so when the vector $a$ is chosen to be primitive, it turns out that every facet of $\Gamma_{\infty}$ has a unique primitive inward vector; the set of such vectors is denoted by  $\mathfrak{D}(\Gamma_\infty)$.

\subsection{Conical Subdivisions of $\mathbf{\mathbb{R}_+^n}$} \label{Conical}

We present here the main results about the conical partition that we use in \cite{LeZu}, where the reader may find more details.

If $a\in\mathbb{R}^{n}$,  \textit{the first meet locus} of $a$ is defined as
\[
F(a)=\left\{  x\in\Gamma_{\infty} \mathbf{ ; }\left\langle a,x\right\rangle
=d\left(  a\right)  \right\}  .
\]
Note that $F(a)$\ is a face of $\Gamma_{\infty}$, and that $F(0)=\Gamma
_{\infty}$. With this notion at hand we define an equivalence relation on $\mathbb{R}^{n}$\ by taking
\[
a\sim a^{\prime}\Longleftrightarrow F\left(  a\right)  =F\left(  a^{\prime
}\right).
\]
Now, given $a_1,\ldots,a_k\in\mathbb{R}^{n}\ (k\leq n)$ we call 
\begin{equation}\label{cone1}
\Delta=\left\{  \lambda
_{1}a_1+\cdots+\lambda_{k}a_k \mathbf{ ; }\ \lambda_{i}\in\mathbb{R}, \lambda_{i}>0\right\}
\end{equation} the \textit{cone strictly spanned} by $a_1,\ldots,a_k$. When the generators $a_1,\ldots,a_k$ are linearly independent over $\mathbb{R}$, the cone is called \textit{simplicial} and when $a_1,\ldots,a_k\in\mathbb{Z}^n$ the cone is called \textit{rational}. If $\{a_1,\ldots,a_k\}$ is a subset of a basis of the $\mathbb{Z}-$module $\mathbb{Z}^n$, we call $\Delta$ a \textit{simple cone}.

In order to describe the equivalence classes of $\sim$ we define \textit{the cone
	associated to} $\tau$, a given face of $\Gamma_{\infty}$, as
\begin{equation}\label{cone2}
\Delta_{\tau}=\left\{  a\in\mathbb{R}^{n}\mathbf{;}\ F\left(  a\right)
=\tau\right\}  .
\end{equation}
Note that $\Delta_{\Gamma_\infty}=\{0\}$. The other equivalence classes are described in the next Lemma, which also provides the relation between 
(\ref{cone1}) and (\ref{cone2}).

\begin{lemma}\label{lemma1}
\begin{enumerate}[(i)]
Let $\tau$ be a face of $\Gamma_{\infty}$, $\tau\neq\Gamma_\infty$, then
\item the topological closure $\overline{\Delta}_{\tau}$ of
$\Delta_{\tau}$ is a rational polyhedral cone and
\[\overline{\Delta}_{\tau}=\left\{  a\in\mathbb{R}^{n}\mathbf{;}F\left(a\right)  \supset\tau\right\}  .\]
\item  Let $\gamma_{1},\ldots,\gamma_{k}$ be the facets of $\Gamma_{\infty}$ containing
$\tau$. Let $a_{1},\ldots,a_{k}\in\mathbb{Z}^{n}\setminus\left\{
0\right\}$ be the unique primitive orthogonal inward vectors to $\gamma_{1}%
,\ldots,\gamma_{k}$ respectively. Then
\begin{gather*}
\Delta_{\tau}=\left\{\lambda_{1}a_{1}+\cdots+\lambda_ka_k\ ;\ \lambda_{i}\in\mathbb{R},\text{ and }\lambda_{i}>0\right\},\\
\text{ and }\quad\overline{\Delta}_{\tau}=\left\{\lambda_{1}a_{1}+\cdots+\lambda_ka_k\ ;\ \lambda_{i}\in\mathbb{R},\text{ and }\lambda_{i}\geq0\right\}.
\end{gather*}
\item $\dim\Delta_{\tau}=\dim\overline{\Delta}_{\tau}=n-\dim\tau.$
\end{enumerate}
\end{lemma}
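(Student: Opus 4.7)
The plan is to establish (ii) first by a direct case analysis on $\dim\tau$, and then to deduce (i) and (iii) as easy consequences. The starting observation is that, since $Vert(\Gamma_\infty)$ is finite, the function
\[
d(a)=\min\{\langle a,v\rangle : v\in Vert(\Gamma_\infty)\}
\]
is continuous, piecewise linear, and positively homogeneous of degree one. Consequently $F(\lambda a)=F(a)$ for every $\lambda>0$, so each $\Delta_\tau$ is automatically a cone.

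For (ii) I would split into the two allowed values of $l$. If $\tau=\gamma_1$ is an edge ($l=1$) with primitive inward normal $a_1$, then by the definition of an inward normal $\langle a_1,\cdot\rangle$ is constant on $\gamma_1$ and strictly larger on $\Gamma_\infty\setminus\gamma_1$, so $F(a_1)=\gamma_1$ and hence $\mathbb{R}_{>0}a_1\subset\Delta_{\gamma_1}$; conversely, $F(a)=\gamma_1$ forces $a$ to be perpendicular to the line spanned by $\gamma_1$ (because $\langle a,\cdot\rangle$ is constant on that edge) and inward-pointing, hence a positive scalar multiple of $a_1$. If $\tau=\{v\}$ is a vertex ($l=2$) with adjacent edges $\gamma_1,\gamma_2$ and corresponding primitive inward normals $a_1,a_2$, take $a=\lambda_1a_1+\lambda_2a_2$ with $\lambda_1,\lambda_2>0$. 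For any vertex $v'\neq v$ of $\Gamma_\infty$, at least one edge $\gamma_i$ does not contain $v'$, so $\langle a_i,v'\rangle>\langle a_i,v\rangle=d(a_i)$ for that $i$ while $\langle a_j,v'\rangle\geq\langle a_j,v\rangle$ for the other index; summing yields $\langle a,v'\rangle>\langle a,v\rangle$ and therefore $F(a)=\{v\}$. For the converse, $F(a)\supset\{v\}$ is equivalent to $a$ lying in the polar of the tangent cone to $\Gamma_\infty$ at $v$; in dimension $2$ that tangent cone is generated by the two edge-direction vectors at $v$, so its polar is precisely $\mathbb{R}_{\geq 0}a_1+\mathbb{R}_{\geq 0}a_2$, and the strict condition $F(a)=\{v\}$ forces both coefficients to be strictly positive.

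Part (i) then follows: the description obtained in (ii) exhibits $\overline{\Delta}_\tau$ as a rational polyhedral cone, its generators lying in $\mathbb{Z}^2$. For the identity $\overline{\Delta}_\tau=\{a:F(a)\supset\tau\}$, one inclusion uses continuity of $d$ (if $a_n\to a$ with $F(a_n)=\tau$, then passing to the limit in $\langle a_n,x\rangle=d(a_n)$ for $x\in\tau$ yields $\tau\subset F(a)$), and the reverse inclusion is immediate from the explicit generators, since requiring $\langle a,x\rangle=d(a)$ on all of $\tau$ is a closed condition that cuts out exactly $\mathbb{R}_{\geq 0}a_1+\mathbb{R}_{\geq 0}a_l$. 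Part (iii) is then transparent: a single generator produces a ray, while the two generators $a_1,a_2$ are linearly independent (being inward normals to two distinct edges meeting at $v$) and thus produce a cone of full dimension two.

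The main obstacle is the converse direction in the vertex case of (ii): showing that $F(a)=\{v\}$ forces $a$ to lie in the open cone spanned by $a_1$ and $a_2$. This relies on the convex-geometric duality that the normal cone at a vertex of a convex polytope is generated by the inward normals of the edges meeting at that vertex, a fact which in dimension $2$ can be verified directly by computing $90^\circ$ rotations of the edge-direction vectors $v_i-v$ and checking that the resulting dual cone coincides with $\mathbb{R}_{\geq 0}a_1+\mathbb{R}_{\geq 0}a_2$.
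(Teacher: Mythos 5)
Your proof is correct and is essentially the standard argument: the paper itself does not prove Lemma \ref{lemma1} but defers to \cite{L-Z} and \cite{D-H}, where the proof (in arbitrary dimension) proceeds by exactly the duality you use, namely that $\{a: F(a)\supset\tau\}$ is the dual of the tangent cone of $\Gamma_\infty$ along $\tau$, generated by the primitive inward normals of the edges containing $\tau$. Your case split on $\dim\tau$ and the explicit two-dimensional verification of the vertex case (rotating edge directions by $90^\circ$) is a sound specialization of that argument, and the deduction of (i) from continuity of $d$ and of (iii) from linear independence of $a_1,a_2$ is correct.
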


We recall that a \textit{fan} $\mathcal{L}$ is a finite collection of rational polyhedral
cones $\{\Lambda_i; i\in I\}$ in $\mathbb{R}^n$ such that: (i) if $\Lambda_i\in \mathcal{L}$ and $\Lambda$ is a face of $\Lambda_i$, then $\Lambda\in \mathcal{L}$; (ii) if $\Lambda_1, \Lambda_2\in \mathcal{L}$, then $\Lambda_1\cap\Lambda_2$ is a face of $\Lambda_1$ and $\Lambda_2$. The \textit{support} of $\mathcal{L}$ is $\cup_{i\in I}\Lambda_i$. A fan $\mathcal{L}$ is called \textit{simplicial} (resp. \textit{simple} ) if all its cones are
simplicial (resp. simple). A fan $\mathcal{L}$ is called \textit{subordinated to} $\Gamma_\infty$, if every cone in $\mathcal{L}$ is contained in an equivalence class of $\sim$. We denote by $gen(\mathcal{L})$, the set of all generators of the cones in $\mathcal{L}$.

Let $\{e_1,\ldots,e_n\}$ denote the canonical basis of $\mathbb{R}^n$ and denote by $\Delta_i$ the cone strictly spanned by $e_i$. Note that if $\Delta_{\tau}\cap\mathbb{R}_{+}^{n}\neq\emptyset$, then $\Delta_{\tau}\cap\mathbb{R}_{+}^{n}$ is a cone of the form (\ref{cone1}), therefore we have:
\begin{lemma}\label{lemma2}
\begin{enumerate}
\item $\{\Delta_\tau\ ;\ \tau \text{ is a face of } \Gamma_\infty\}:=\{\Delta_\tau\}$ is a conical partition of $\mathbb{R}^n$ into open cones. 
\item If $Int(\Delta)$ denotes the topological interior of a cone $\Delta$, then $$\mathcal{C}:=\{Int(\Delta_{\tau}\cap\mathbb{R}_{+}^{n})\ \mathbf{;}\ \Delta_{\tau}\cap\mathbb{R}_{+}^{n}\neq\emptyset\}\bigcup \cup_{i=1}^n\Delta_i,$$ forms a conical partition of $\mathbb{R}_{+}^{n}\setminus\{0\}$ into open cones.
\item $\{\overline{\Delta_\tau}\}$ (resp. $\overline{\mathcal{C}}:=\{\overline{\Delta_\tau}\}\bigcup \cup_{i=1}^n\overline{\Delta}_i$) is a fan
subordinated to $\Gamma_\infty$ with support $\mathbb{R}^{n}$ (resp. $\mathbb{R}_{+}^{n}$). 
\end{enumerate}
\end{lemma}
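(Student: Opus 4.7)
The plan is to derive (i) straight from the equivalence relation $\sim$, refine to (ii) by restricting to $\mathbb{R}_+^2$ and adjoining the two coordinate rays, and finally deduce (iii) by taking closures and checking the fan axioms.

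For (i), every $a\in\mathbb{R}^2$ has a unique first meet locus $F(a)$ which is a face of $\Gamma_\infty$, so $a\in\Delta_{F(a)}$; pairwise disjointness is immediate from the definition of $\sim$. Each nonzero $\Delta_\tau$ is the open strict cone described by Lemma~\ref{lemma1}(ii), and $\Delta_{\Gamma_\infty}=\{0\}$.

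For (ii), given $a\in\mathbb{R}_+^2\setminus\{0\}$, part (i) places $a$ in a unique $\Delta_\tau$. If $a\in\mathbb{R}_{>0}^2$, then a relative neighborhood of $a$ inside the affine span of $\Delta_\tau$ remains in $\mathbb{R}_+^2$, hence $a\in Int(\Delta_\tau\cap\mathbb{R}_+^2)$; if instead $a$ lies on a positive coordinate axis, then $a\in\Delta_1\cup\Delta_2$. Disjointness of the members of $\mathcal{C}$ follows from (i) together with the observation that any open two-dimensional strict cone contained in the closed positive quadrant is actually contained in the open positive quadrant, hence is disjoint from $\Delta_1$ and $\Delta_2$.

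For (iii), each $\overline{\Delta_\tau}$ is a rational polyhedral cone by Lemma~\ref{lemma1}(i)--(ii). For a two-dimensional $\overline{\Delta_\tau}$ (i.e.\ $\tau$ a vertex of $\Gamma_\infty$), its proper faces are the rays $\overline{\Delta_{\tau'}}$ for the edges $\tau'$ incident to $\tau$, together with $\{0\}$; moreover $\overline{\Delta_\tau}\cap\overline{\Delta_\sigma}=\overline{\Delta_{\tau''}}$, where $\tau''$ is the smallest face of $\Gamma_\infty$ containing $\tau\cup\sigma$. This verifies the two fan axioms, and subordination to $\Gamma_\infty$ is built into the definition of $\Delta_\tau$. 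For $\overline{\mathcal{C}}$ one additionally treats the rays $\overline{\Delta_i}$: either $e_i$ is the primitive inward normal to some edge of $\Gamma_\infty$, in which case $\overline{\Delta_i}$ already appears among the $\overline{\Delta_\tau}$, or $e_i$ lies in the relative interior of a two-dimensional $\overline{\Delta_\tau}$ and subdivides it into two subordinated subcones. The main obstacle is (ii), where one must verify that the interior operation does not leave gaps on the positive axes, which is precisely why $\Delta_1$ and $\Delta_2$ must be adjoined by hand.
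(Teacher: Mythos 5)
The paper does not actually prove Lemma \ref{lemma2}; it only records the observation immediately preceding it and defers to \cite{L-Z} and \cite{D-H}, so your write-up fills a real gap, and its overall architecture (partition from the equivalence classes of $\sim$, restriction to the quadrant, then closures and the fan axioms via Lemma \ref{lemma1}) is the intended one. Two points need repair, though. First, make explicit that $Int$ must be read as the \emph{relative} interior (interior inside the linear span of the cone): for an edge $\tau$ whose primitive inward normal lies in the open quadrant, $\Delta_\tau\cap\mathbb{R}_{+}^{2}=\Delta_\tau$ is a ray with empty interior in $\mathbb{R}^{2}$, so under a literal reading of ``topological interior'' that ray would disappear from $\mathcal{C}$ and part (2) would be false (compare the ray through $(2,3)$ in Example \ref{Example2}). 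Your covering argument for $a\in\mathbb{R}_{>0}^{2}$ already works with the relative interior, but that hypothesis is doing silent work; you should also note that your disjointness argument only addresses the two-dimensional members of $\mathcal{C}$, the one-dimensional ones being either contained in the open quadrant or literally equal to $\Delta_1$ or $\Delta_2$.

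Second, your treatment of $\overline{\Delta_i}$ in part (3) is incorrect as stated. You claim that if $e_i$ lies in the relative interior of a two-dimensional $\overline{\Delta_\tau}$ then $\overline{\Delta_i}$ ``subdivides it into two subordinated subcones.'' But $\overline{\mathcal{C}}$ contains the whole cone $\overline{\Delta_\tau}\cap\mathbb{R}_{+}^{2}$, not two halves of it; and if a collection contained both a two-dimensional cone and a ray through its interior, axiom (ii) of a fan would fail, since the intersection of the cone with that ray is the ray, which is not a face of the cone. The correct observation is simpler: $e_i$ lies on the boundary of $\mathbb{R}_{+}^{2}$, and $\overline{\Delta_\tau}\cap\mathbb{R}_{+}^{2}\subseteq\mathbb{R}_{+}^{2}$ is a pointed cone, so $\overline{\Delta_i}$ can only occur as one of its extremal rays, hence as a face; adjoining $\overline{\Delta_1}$ and $\overline{\Delta_2}$ therefore never creates an interior ray. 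Relatedly, ``subordination is built into the definition'' glosses over the fact that a closed two-dimensional cone $\overline{\Delta_\tau}$ meets several equivalence classes of $\sim$; one must read the paper's definition as requiring each cone of the fan to be contained in the closure of a single class (equivalently, to have its relative interior inside a single $\Delta_\tau$), which is exactly what Lemma \ref{lemma1}(i) provides.
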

Each cone in $\mathcal{C}$ can be partitioned into a finite number of simplicial cones. By adding new rays,
each simplicial cone can be partitioned further into a finite number of simple cones. In this way we may find simple fans containing $\{\overline{\Delta_\tau}\}$ (resp. $\overline{\mathcal{C}}$) and subordinated to $\Gamma_\infty$. From now on, we fix a simple fan $\mathcal{F}$ subordinated to $\Gamma_\infty$ with support $\mathbb{R}_{+}^{n}$. We set $\mathcal{F}_0$ to be the cone $\mathbb{R}_{+}^{n}$ and its faces, and we will say that $\mathcal{F}$ is \textit{trivial} if $\mathcal{F}=\mathcal{F}_0$. 
 
\begin{example}\label{ex. 1}
Given arbitrary $u,v \in \mathbb{N}$, set $f(x,y) = (y^{-1}+x)^u+y^v\in K[x^{\pm1},y^{\pm1}]$. 
\begin{figure}[ht]
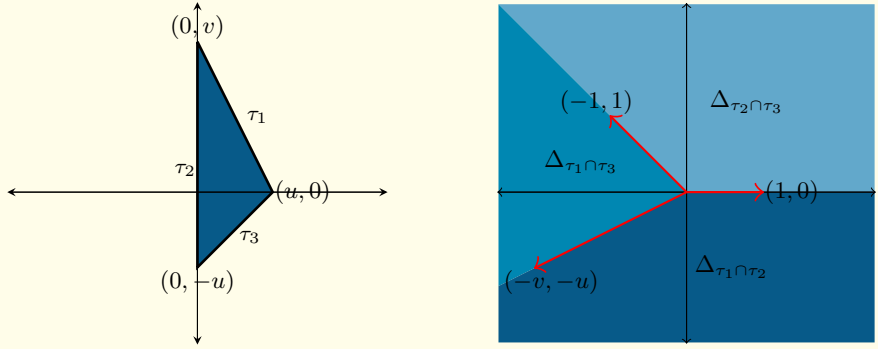
\label{Fig1}	
\begin{multicols}{2}
\begin{pgfpicture}{-2.5cm}{-2cm}{2.5cm}{2.5cm}
        \color{darkblue}
        \pgfmoveto{\pgfxy(0,2)}
        \pgflineto{\pgfxy(0,-1)}
        \pgflineto{\pgfxy(1,0)}
        \pgflineto{\pgfxy(0,2)}
        \pgfclosepath
        \pgffill
        \pgfsetlinewidth{1pt}
        \color{black}
        \pgfmoveto{\pgfxy(0,2)}
        \pgflineto{\pgfxy(0,-1)}
        \pgflineto{\pgfxy(1,0)}
        \pgflineto{\pgfxy(0,2)}
        \pgfstroke
        \pgfsetlinewidth{0.4pt}
        \pgfsetendarrow{\pgfarrowsingle}
        \pgfxyline(-2.5,0)(2.5,0)
        \pgfxyline(0,-2)(0,2.5)
        \pgfxyline(2.5,0)(-2.5,0)
        \pgfxyline(0,2.5)(0,-2)
                
        \pgfputat{\pgfxy(0.8,1)}{\pgfbox[center,center]{\footnotesize $\tau_1$}}
        \pgfputat{\pgfxy(-0.3,0.3)}{\pgfbox[left,center]{\footnotesize $\tau_2$}}
        \pgfputat{\pgfxy(0.7,-0.6)}{\pgfbox[center,center]{\footnotesize $\tau_3$}}
        \pgfputat{\pgfxy(0,2.2)}{\pgfbox[center,center]{\small{$(0,v)$}}}
        \pgfputat{\pgfxy(1.4,0)}{\pgfbox[center,center]{\small{$(u,0)$}}}
        \pgfputat{\pgfxy(0,-1.2)}{\pgfbox[center,center]{\small{$(0,-u)$}}}
        \end{pgfpicture}

\columnbreak
\begin{pgfpicture}{-2.5cm}{-2cm}{2.5cm}{2.5cm}
        \color{lightblue}
        \pgfmoveto{\pgfxy(2.5,0)}
        \pgflineto{\pgfxy(2.5,2.5)}
        \pgflineto{\pgfxy(-2.5,2.5)}
        \pgflineto{\pgfxy(0,0)}
        \pgflineto{\pgfxy(2.5,0)}
        \pgfclosepath
        \pgffill
        \color{midblue}
        \pgfmoveto{\pgfxy(-2.5,2.5)}
        \pgflineto{\pgfxy(-2.5,-1.25)}
        \pgflineto{\pgfxy(0,0)}
        \pgflineto{\pgfxy(-2.5,2.5)}
        \pgfclosepath
        \pgffill
        \color{darkblue}
        \pgfmoveto{\pgfxy(2.5,0)}
        \pgflineto{\pgfxy(2.5,-2)}
        \pgflineto{\pgfxy(-2.5,-2)}
        \pgflineto{\pgfxy(-2.5,-1.25)}
        \pgflineto{\pgfxy(0,0)}
        \pgflineto{\pgfxy(2.5,0)}
        \pgfclosepath
        \pgffill
        \color{black}
        \pgfsetlinewidth{0.4pt}
        \pgfsetendarrow{\pgfarrowto}
        \pgfxyline(-2.5,0)(2.5,0)
        \pgfxyline(0,-2)(0,2.5)
        \pgfxyline(2.5,0)(-2.5,0)
        \pgfxyline(0,2.5)(0,-2)
        \color{red}
        \pgfsetlinewidth{0.8pt}
        \pgfsetendarrow{\pgfarrowto}
        \pgfxyline(0,0)(1,0)
        \color{red}
        \pgfsetlinewidth{0.8pt}
        \pgfsetendarrow{\pgfarrowto}
        \pgfxyline(0,0)(-1,1)
        \color{red}
        \pgfsetlinewidth{0.8pt}
        \pgfsetendarrow{\pgfarrowto}
        \pgfxyline(0,0)(-2,-1)
        \color{black}
        \pgfputat{\pgfxy(0.8,1.2)}{\pgfbox[center,center]{\small $\Delta_{\tau_2\cap\tau_3}$}}
        \pgfputat{\pgfxy(0.6,-1)}{\pgfbox[center,center]{\small $\Delta_{\tau_1\cap\tau_2}$}}
        \pgfputat{\pgfxy(-1.4,0.4)}{\pgfbox[center,center]{\small $\Delta_{\tau_1\cap\tau_3}$}}
        \pgfputat{\pgfxy(1.4,0)}{\pgfbox[center,center]{\small{$(1,0)$}}}
        \pgfputat{\pgfxy(-1.2,1.2)}{\pgfbox[center,center]{\small{$(-1,1)$}}}
        \pgfputat{\pgfxy(-1.8,-1.2)}{\pgfbox[center,center]{\small{$(-v,-u)$}}}
\end{pgfpicture}
\end{multicols}
\caption{$\Gamma_\infty((y^{-1}+x)^u+y^v)$ and the conical partition of $\mathbb{R}^2$ induced by it.}
\end{figure}

Note that $\mathcal{C}=\mathbb{R}_+^2\setminus\{0\}$ and $\overline{\mathcal{C}}$ is trivial. By introducing the ray $\left(  1,1\right)
\in\mathbb{R}_{+}^{2}$, we get a nontrivial and simple fan $\mathcal{F}$ subordinated to $\Gamma_{\infty}$ and supported on $\mathbb{R}_{+}^{2}$.
\end{example}

In the classical case of a polynomial function $f$, the description of the set of generators of a simple fan subordinated to the Newton polyhedron of $f$, $\Gamma(f)$, is very simple and consists just of the normal vectors to the facets of $\Gamma(f)$ and a finite set of primitive vectors
corresponding to the extra rays induced by the subdivision into simple cones. In our approach,  the description of the set of generators is not so simple, see \cite{LeZu}*{Remark 3.5 (ii)}. When $n=2,$ we have
\begin{equation}\label{decomposition}
gen(\mathcal{F})=\mathfrak{D}(\mathcal{F})\cup\mathcal{E}(\mathcal{F})\cup\mathcal{E}^{\prime}(\mathcal{F}),
\end{equation}
where $\mathfrak{D}(\mathcal{F})\subset\mathfrak{D}(\Gamma_{\infty}),$
$\mathcal{E}(\mathcal{F})\subseteq\{e_{1},e_{2}\}$ and $\mathcal{E}^{\prime}(\mathcal{F})$ is the set of rays coming from the subdivision into simple cones. When $n\geq3$, the set $gen(\mathcal{F})$ becomes
\begin{equation*}
gen(\mathcal{F})=\mathfrak{D}(\mathcal{F})\cup\mathfrak{D}^\prime(\mathcal{F})\cup\mathcal{E}(\mathcal{F})\cup\mathcal{E}^{\prime}(\mathcal{F}),
\end{equation*}
where the set $\mathfrak{D}^\prime(\mathcal{F})$ consists of the rays coming from the intersection of $\mathbb{R}_{+}^{n}$ with some of the cones in the corresponding simplicial fan subordinated to $\Gamma_{\infty}$ and supported on  $\mathbb{R}^{n}$. The situation is illustrated in the following example.
\begin{example}\label{Ex 2}
Take $f(x,y,z)=x^{-1}+y^{-1}+z^{-1}(1+y)+{(xz)}^{-1}y\in K[x^{\pm 1},y^{\pm 1},z^{\pm 1}]$. Then the set of inward normal vectors of $\Gamma_\infty$ is  \[\mathfrak{D}(\Gamma_{\infty})=\{(1,1,1),(0,0,1),(0,-1,-1),(-1,-1,-2),(-1,0,0)\}.\] The cone generated by the vectors $(0,0,1),\ (1,1,1)$ and $(-1,0,0)$ belongs to the original simple fan supported on $\mathbb{R}^3$, but its intersection with $\mathbb{R}_+^3$ is a $3-$dimensional cone generated by the vectors $(0,0,1),\ (1,1,1)$ and $(0,1/2,1/2)$. The last vector is in neither set of (\ref{decomposition}). 
\end{example}
A finner description of the set of candidate poles of the local zeta functions for Laurent polynomials, and perhaps of our results (see Theorem \ref{Th2}), will require a characterization of the rays in $\mathfrak{D}^\prime(\mathcal{F})$, but so far this subject has scape from the attempts of the author. This matter  was not clarified enough in our previous work.

\subsection{Non--degeneracy Conditions}

Given $a\in \mathbb{R}_+^n$, we define the \textit{face
function} of $f(x)=\sum_{m\in S}c_{m}x^m$ with respect to $a$ as
\[
f_a(x)=\sum_{m\in S\cap F(a)}c_{m}x^m.
\]
 We set $T^{n}=T^{n}\left(  K\right)  :=\left\{
(x_{1},\ldots,x_{n})\in K^{n}\ ;\  x_{1}\cdots x_{n}\neq0\right\}  $, for the
$n$-dimensional torus considered as a $K$-analytic manifold. 
\begin{definition}\label{nondegerate}
Let $f(x)=\sum_{m\in S}c_{m}x^m\in K[x^{\pm 1}],$ be a non constant Laurent polynomial with
Newton polytope $\Gamma_{\infty}$. We say that $f$ is non--degenerate
with respect to  $a\in \mathbb{R}_+^n$, if  the system of equations
$$\left\{  f_{a}(x)=0,\nabla f_{a}(x)=0\right\}$$ has no
solutions in $T^{n}\left(  K\right)$. We say that $f$ is weakly non--degenerate with respect
to $\Gamma_{\infty}$ over $K$, if $f$ is non--degenerate with respect to any $a\in \mathbb{R}_+^n$.
\end{definition}

\begin{remark}
By allowing $a$ to take values on $\mathbb{R}^n$, we have that the property of being non-degenerate with respect to any $a\in\mathbb{R}^n$
is precisely the standard non- degeneracy condition of Khovanskii, see \cite{LeZu}*{Section 2.3}.
\end{remark}
\begin{example}
Let $f(x,y)$ as in Example \ref{ex. 1}. Then $f$ is degenerate with respect to $\Gamma_\infty$ but $f$ is weakly non-degenerate with respect to  $\Gamma_\infty$. 
\end{example}

Now we consider  $L\subseteq\mathbb{C}$ a number field with ring of
integers $\mathcal{R}_{L}$. Take a maximal ideal $\mathfrak{m}$ of $\mathcal{R}_{L}$, and assume that $\mathbb{F}_{q}$ is the residue field of
$\mathfrak{m}$. In addition, denote by $K:=K_{\mathfrak{m}}$ the completion of $L$ with respect to the $\mathfrak{m}$-adic valuation and denote by $\overline{f}$ the reduction modulo $\mathfrak{m}$ of $f$. 

\begin{definition}\label{nondegerateFq}
	A non constant Laurent polynomial $f(x)=\sum_{m\in S}c_{m}x^m\in K[x^{\pm 1}],$ is weakly non--degenerate with respect
	to $\Gamma_{\infty}$ over $\mathbb{F}_{q}$ if for any $a\in \mathbb{R}_+^n$, 
	the system of equations
	\begin{equation*}
	\{\overline{f_{a}}(x)=0,\ \nabla\overline{f_{a}}(x)=0\} 
	\end{equation*}
	has no solutions in $(\mathbb{F}_{q}^{\times})^{2}$.
\end{definition}

We may assume that a non--degenerated Laurent polynomial can be written as
\begin{equation}\label{quotient}
	f(x)=\frac{\widehat{f}(x)}{x^m},
\end{equation}
for some $m\in\mathbb{N}$ and $\widehat{f}(x)\in K[x_1,\ldots,x_n]$. As in the classical case, if $f(x)\in K[x^{\pm 1}]$ is a weakly non--degenerate Laurent polynomial  with respect to $\Gamma_{\infty}$ over $K$,  then for almost all $\mathfrak{m}$, i.e. for $q$ big enough, $f$ is weakly non--degenerate  with respect to $\Gamma_{\infty}$ over $\mathbb{F}_{q}.$ This fact follows easily from (\ref{quotient}) and the corresponding remarks in \cite{DeHoo}*{p.4}. 

\section{Local Zeta Functions for Laurent Polynomials}

In this section we give the definition and the main properties of the `twisted' local
zeta functions attached to a Laurent polynomial, of course we only summarize the results presented in \cite{LeZu}. 

Let $K$ be a $p-$adic field, i.e. $[K:\mathbb{Q}_{p}]<\infty$.  Let $R_{K}$\ be the
valuation ring of $K$, $P_{K}$ the maximal ideal of $R_{K}$, and $\overline
{K}=R_{K}/P_{K}$ \ the residue field of $K$, which is a finite field of lets say $q$ elements, i.e. $\overline{K}=\mathbb{F}_{q}$. Given $z\in
K$, $ord\left(  z\right)$  will denote its valuation and $\left\vert z\right\vert _{K}=q^{-ord\left(  z\right)
}$. We will fix a uniformizer $\mathfrak{p}$  of $P_{K}$ and define the angular component of $z$ by $ac\ z=z\mathfrak{p}^{-ord(z)}$.

We equip $K^{n}$ with the norm $\left\Vert (x_1,\ldots,x_n)
\right\Vert _{K}:=\max_{1\leq i\leq n}\left(  \left\vert x_i\right\vert _{K}\right)  $. Then $\left(  K^{n},\left\Vert \cdot
\right\Vert _{K}\right)  $ is a complete metric space and the metric topology
is equal to the product topology.

A quasicharacter of $K^{\times}$ is a continuous
homomorphism  $\omega:\ K^{\times}\to\mathbb{C}^{\times}$. The set of
quasicharacters  that we will denote by $\Omega\left(  K^{\times}\right)$ has an Abelian group structure. Now, to a given complex number $s$ we associate an element $\omega_{s}$ of $\Omega\left(  K^{\times
}\right)  $ by setting $\omega_{s}\left(  x\right)
=\left\vert x\right\vert _{K}^{s}$. When we choose that $\omega\left(  \mathfrak{p}\right)  =q^{-s}$ for every
$\omega\in\Omega\left(  K^{\times}\right)  $, we have that 
\begin{equation}\label{Characters}
\omega\left(  x\right)  =\omega_{s}\left(  x\right)  \chi\left(  ac\text{
}x\right),  
\end{equation}
where $\chi:=\omega\mid_{R_{K}^{\times}}.$ Equation (\ref{Characters}) shows that  $\Omega\left(
K^{\times}\right) \simeq\mathbb{C}/\left(  2\pi\sqrt{-1}/\ln
q\right)  \mathbb{\times}\left(  R_{K}^{\times}\right)  ^{\ast}$, where
$\left(  R_{K}^{\times}\right)  ^{\ast}$ is the group of characters of
$R_{K}^{\times}$; therefore $\Omega\left(  K^{\times}\right)  $ is a one dimensional
complex manifold. Now we note that $\sigma\left(  \omega\right)
:=\operatorname{Re}(s)$ depends only on $\omega$, and $\left\vert
\omega\left(  x\right)  \right\vert =\omega_{\sigma\left(  \omega\right)
}\left(  x\right)  $, thus it makes sense to define the following open subset of
$\Omega\left(  K^{\times}\right) , $ 
\[
\Omega_{(a,b)}\left(  K^{\times}\right)  =\left\{  \omega\in\Omega\left(
K^{\times}\right)  ;\sigma\left(  \omega\right)  \in(a,b)\subseteq\mathbb{R}\right\}  .
\]
We recall that a locally constant function on $K^{n}$ with compact support is
called a \textit{Bruhat-Schwartz function}, these functions form a
$\mathbb{C}$-vector space denoted as $S(K^{n})$. Given $\Phi\in S(K^{n}),\ \omega\in\Omega\left(  K^{\times}\right)$ and $f$ a Laurent polynomial, we define the 
local zeta function as
\[
Z_{\Phi}(\omega,f) =Z_{\Phi}(s,\chi,f)=\int\limits_{T^{n}\left(  K\right)  }
\Phi(x)\  \omega(f(x))\ \vert dx\vert,
\]
where $\vert dx\vert $ is the normalized Haar measure of $K^{n}$.

The convergence of $Z_{\Phi}(\omega,f)$ is not a straightforward
matter, this is an important difference with the classical case. We introduce here a few more notation in order to elucidate this assertion. 

For $a=\left(  a_{1},\ldots,a_{n}\right)  \in\mathbb{Z}^{n}\setminus
\left\{  0\right\}  $, we set $\left\Vert a\right\Vert =a_{1}+\ldots+a_{n}$ , and
\begin{equation}\label{P(A)}
\mathcal{P}(a)  :=\begin{cases}
 -\frac{\left\Vert a\right\Vert }{d\left(  a\right)  }+\frac{2\pi
 	\sqrt{-1}\mathbb{Z}}{d\left(  a\right)  \ln q}& \text{ if } d(a)\neq0,\\
 \emptyset & \text{ if }  d(a) =0.
\end{cases}
\end{equation}
Let $\mathcal{F}$ be the fixed simple fan subordinated to $\Gamma_{\infty}$
and supported in $\mathbb{R}_{+}^{n}$ as before.  Set
\begin{multline*}
A(\mathcal{F}):=\bigcup\limits_{\substack{a\in gen(\mathcal{F})\\d(a)\neq 0 }}
\left\{  \dfrac{\left\Vert a\right\Vert }{-d\left(  a\right)  }\ ;\ 
d\left(  a\right)  <0\right\},\quad B\left(  \mathcal{F}\right)  :=\bigcup\limits_{\substack{a\in gen(\mathcal{F})\\d(a)\neq 0 }}
\left\{  \dfrac{\left\Vert a\right\Vert }{-d\left(  a\right)  }\ ;\ d\left(  a\right)  >0\right\}  ,\\
\alpha:=\alpha(\mathcal{F})=\begin{cases}
\min_{\gamma\in A\left(  \mathcal{F}\right)}&\text{ if }  A\left(  \mathcal{F}\right)  \neq\emptyset\\
+\infty & \text{ if }  A\left(  \mathcal{F}\right)  =\emptyset,
\end{cases}
\quad
\text{and}\quad \beta:=\beta(\mathcal{F})=\max_{\gamma\in B\left(  \mathcal{F}\right)
\cup\left\{  -1\right\}  }\gamma.
\end{multline*}
\begin{theorem}[\protect{\cite{LeZu}*{Theorem 3.3}}] \label{Th1}
Let $f$ be a weakly non-degenerate Laurent polynomial with respect to
$\Gamma_{\infty}$ over $K$, and let $\mathcal{F}$ be a fixed simple and non-trivial fan subordinated to $\Gamma_{\infty}$. Then the following assertions hold:
\begin{enumerate}[(i)]
\item $Z\Phi(\omega,f)$ converges for $Re(s)\in(\beta,\alpha)$;
\item $Z_\Phi(\omega,f)$ has a
meromorphic continuation to the whole complex plane as a rational
function of $q^{-s}$, and the poles belong to the set
\[
\bigcup\limits_{\substack{a\in gen(\mathcal{F})\\d(a)\neq 0 }}
\mathcal{P}(a)\cup\left\{  -1+\frac{2\pi\sqrt{-1}\text{ }\mathbb{Z}}{\ln
q}\right\}.
\]
In addition, the multiplicity of any pole is least or equal to $n$.
\end{enumerate}
\end{theorem}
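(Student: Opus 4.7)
The plan is to combine a toric-type change of variables coming from the simple fan $\mathcal{F}$ with a stratification of $T^{2}(K)$ by valuations, exhibiting $Z(s,f)$ as a finite sum whose pieces evaluate as geometric series in $q^{-s}$. First I would partition
\[
T^{2}(K) = (R_K^{\times})^{2}\ \sqcup\ \bigsqcup_{\Delta} E_{\Delta},\quad E_{\Delta} := \{(x,y)\in T^{2}(K) : (\mathrm{ord}\,x,\mathrm{ord}\,y)\in\Delta\cap\mathbb{Z}^{2}\},
\]
with $\Delta$ ranging over the open cones of $\mathcal{F}$ (extended by symmetry to the other quadrants of $\mathbb{R}^{2}$). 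The central piece $(R_K^{\times})^{2}$ is an Igusa-type integral of $|f|_K^{s}$ over a compact open set; weak non-degeneracy at $a=(0,0)$ forces $f$ to be smooth on $T^{2}(\overline K)$, so by a standard Denef-style stationary-phase argument this contribution is rational in $q^{-s}$ with poles confined to $\{-1+2\pi\sqrt{-1}\mathbb{Z}/\ln q\}$.

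For each two-dimensional simple cone $\Delta\in\mathcal{F}$ with lattice basis $\{a_{1},a_{2}\}$, I would perform the monomial substitution $x=u_{1}^{a_{1,1}}u_{2}^{a_{2,1}}$, $y=u_{1}^{a_{1,2}}u_{2}^{a_{2,2}}$, which bijects $(P_K\setminus\{0\})^{2}$ with $E_{\Delta}$. Simplicity of $\Delta$ makes the Jacobian determinant a unit, giving $|dx\,dy|=|u_{1}|^{\|a_{1}\|-1}|u_{2}|^{\|a_{2}\|-1}|du_{1}\,du_{2}|$, while factoring out the lowest-degree monomials yields $f(x,y)=u_{1}^{d(a_{1})}u_{2}^{d(a_{2})}\tilde f_{\Delta}(u_{1},u_{2})$, whose restriction along $u_{i}=0$ is the face function $f_{a_{i}}$ in monomial coordinates. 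Weak non-degeneracy together with Hensel's lemma shows that $|\tilde f_{\Delta}|_{K}=1$ almost everywhere on $(P_K\setminus\{0\})^{2}$, so after stratifying by $(\mathrm{ord}\,u_{1},\mathrm{ord}\,u_{2})$ the chart integral reduces to
\[
\int_{(P_K\setminus\{0\})^{2}} |u_{1}|^{d(a_{1})s+\|a_{1}\|-1}|u_{2}|^{d(a_{2})s+\|a_{2}\|-1}|du_{1}\,du_{2}|,
\]
which factors into two geometric series with ratios $q^{-(d(a_{i})s+\|a_{i}\|)}$, converging precisely when $\mathrm{Re}(d(a_{i})s+\|a_{i}\|)>0$ for both indices. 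The one-dimensional cones are treated analogously and contribute a single geometric factor each.

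Summing over all cones of $\mathcal F$ presents $Z(s,f)$ as a rational function of $q^{-s}$ whose denominator is a product of factors $1-q^{-(d(a)s+\|a\|)}$ for $a\in gen(\mathcal{F})$ with $d(a)\neq 0$ together with $1-q^{-s-1}$. The poles therefore lie in $\bigcup_{a}\mathcal{P}(a)\cup\{-1+2\pi\sqrt{-1}\mathbb{Z}/\ln q\}$, with multiplicity at most two since every chart integral contributes at most two such geometric factors to its denominator. For (i), intersecting the conditions $\mathrm{Re}(s)>\|a\|/(-d(a))$ for $d(a)>0$, $\mathrm{Re}(s)<\|a\|/(-d(a))$ for $d(a)<0$, and $\mathrm{Re}(s)>-1$ from the central piece gives precisely $\mathrm{Re}(s)\in(\beta,\alpha)$ with $\alpha,\beta$ as defined. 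The main obstacle I expect is the Hensel-style argument converting weak non-degeneracy into the unit reduction $|\tilde f_{\Delta}|_K=1$, along with the accompanying residue-field summation that absorbs the locus where $\tilde f_\Delta$ vanishes modulo $\mathfrak p$; a secondary nuisance specific to the Laurent setting is the bookkeeping needed to reduce strata $E_\Delta$ lying outside $\mathbb{R}_+^2$ to the fan $\mathcal{F}$ supported on $\mathbb{R}_+^2$, which is carried out via monomial automorphisms $x\mapsto x^{\pm1}$, $y\mapsto y^{\pm1}$ of $T^2(K)$.
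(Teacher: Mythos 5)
The paper itself does not prove this theorem: it is imported verbatim from \cite[Theorem 3.3]{L-Z}, where (as the paper notes) the argument goes through an actual toric resolution of singularities built from $\mathcal{F}$. Your route --- stratify by the valuation vector, use the simple cones for unimodular monomial substitutions, and finish with the stationary phase formula --- is instead the Denef--Hoornaert style argument, which is exactly what the paper uses for its own Theorem \ref{Th2}; so the strategy is viable and arguably more elementary. But there are two genuine gaps. The first is the domain. The fan $\mathcal{F}$ is supported on $\mathbb{R}_{+}^{2}$ and the asserted pole set involves only $a\in gen(\mathcal{F})\subset\mathbb{R}_{+}^{2}$; consistently with this (and with the definition of $Z_0$ in Section \ref{Sect_EF} and the example following the theorem), the integral in question lives on the punctured polydisk $\left(R_K\setminus\{0\}\right)^2$, so only valuation vectors in $\mathbb{N}^2$ occur. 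Your plan to absorb the strata with valuation vectors outside $\mathbb{R}_{+}^{2}$ ``by symmetry'' via $x\mapsto x^{-1}$ does not work: inverting a variable reflects $\Gamma_\infty$ and multiplies the measure by $|x|_K^{-2}$, so those strata are governed by inward normals $b$ of $\Gamma_\infty$ lying \emph{outside} $\mathbb{R}_{+}^{2}$ and would contribute candidate poles $-\left\Vert b\right\Vert/d(b)$ that are not in the stated set. You must either restrict to the punctured polydisk from the outset or accept that the conclusion you are aiming at is not the one you would actually obtain.

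The second gap is the multiplicity bound. A two-dimensional chart a priori puts \emph{three} vanishing factors in the denominator: $1-q^{-s-1}$ (from the residue-field count on the locus where the strict transform vanishes mod $\mathfrak{p}$, which you correctly note must be ``absorbed'' by stationary phase) together with the two geometric factors $1-q^{-\left\Vert a_j\right\Vert-d(a_j)s}$. These can all vanish at the same point --- e.g.\ at $s=-1$ whenever $(1,1)\in F(a_1)\cap F(a_2)$ --- so ``at most two geometric factors per chart'' does not give multiplicity $\leq 2$. The missing input is Lemma \ref{lemma1}(iii): a two-dimensional cone corresponds to a vertex of $\Gamma_\infty$, hence its face function is a single monomial with no zeros in the torus, the count $N$ vanishes, and the factor $1-q^{-s-1}$ simply does not occur in such charts; it occurs only for cones of dimension $\leq 1$, where at most one geometric factor accompanies it. Relatedly, ``$|\tilde f_{\Delta}|_K=1$ almost everywhere'' is not what weak non-degeneracy gives nor what you need; the correct statement is that on the bad locus the reduced face function is nonsingular on the torus, so that the stationary phase formula (cf.\ Remark \ref{Nota3} and Lemma \ref{lema5}) applies. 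You flag this as the expected main obstacle, but it is precisely the step that must be carried out for the proof to close.
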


\begin{example}
Take $f(x,y) = (y^{-1}+x)^u+y^v\in K[x^{\pm1},y^{\pm1}]$, with arbitrary $u,v\in \mathbb{N}$, as in Example \ref{ex. 1}. Take $\Phi$ as the
characteristic function of $P_{K}^{2}$, and
$\omega=\omega_{s}$. Since $A(\mathcal{F})=1/u,\ B(\mathcal{F})=\emptyset$, we have that $\alpha=1/u$ and $\beta=-1$. So $Z_\Phi(\omega_s,f)$ will converge on the interval $(-1,1/u)$. But actually the integral does converge for $Re(s) < \frac{1}{n}$, as the following 
straightforward computation shows.
\begin{gather*}
Z_\Phi(\omega_s,f) =\int\limits_{(P_K\setminus\{0\})^2} |f(x,y)|_K^s\ |dxdy|=\sum_{a=1}^{\infty}\sum_{b=1}^{\infty}
\int\limits_{\mathfrak{p}^aR_K^\times\times\mathfrak{p}^bR_K^\times} |f(x,y)|_K^s\ |dxdy|\\
=\frac{\left(  1-q^{-1}\right)  q^{-2+us}}{1-q^{-1+us}}.
\end{gather*}
\end{example}

\section{Explicit Formulas}\label{Sect_EF}

The main result of the paper is an explicit formula for $Z_\Phi(\omega,f)$ when $f$ is a weakly non-degenerate Laurent polynomial, $\Phi$ is the characteristic function of $R_K^n$ and $\omega$ is an arbitrary element of $\Omega(K^{\times})$. Such type of formulas are known in the classical case of local zeta functions for polynomial functions: see \cite{DeHoo} for the case $K=\mathbb{Q}_p$ and $\chi=\chi_{triv}$, see \cite{Hoo} for the case $K=\mathbb{Q}_p$ and $\chi$ a non trivial character of $\mathbb{Z}_p^\times$ with conductor 1. The conductor $c_\chi$ of $\chi$ is defined as the smallest $c\in\mathbb{N}\setminus\{0\}$, such that $\chi$ is trivial on $1+p^c\mathbb{Z}_p$. Here we follow the same idea of \cite{Hoo}, but our results do not require any assumption about the size of $c_\chi$, this is due to the approach in the computation of the integrals appearing in the explicit formula for $Z_\Phi(\omega,f)$. The description of our approach is the subject of the following section.
\subsection{Stationary Phase Formula}

The stationary phase formula was introduced by Igusa with the aim of provide a tool for the explicit computation of some local zeta functions, the reader interested in the details may consult \cite{IBook}. 

We denote by $\bar{x}$ the image of $x\in R_K^n$ under the canonical homomorphism $R_K^n\to(R_K/P_K)^n\simeq \mathbb{F}_q^n$, i.e. $\bar{x}$ is the reduction of $x$ modulo $\mathfrak{p}$. Given a Laurent polynomial $f(x)\in R_K[x^{\pm 1}]$ such that not all its coefficients are in $P_K$, we denote by $\bar{f}$ the polynomial obtained by reducing the coefficients of $f$ mod $\mathfrak{p}$. We fix a lifting $L$ of $\mathbb{F}_q$ in $R_K$, thus we have that $L^n$ is mapped bijectively onto $\mathbb{F}_q^n$ by the aforementioned canonical homomorphism. Let  $\overline{D}$ be a subset of $\mathbb{F}_q^n$ and $D$ its preimage under the homomorphism. Let $S(f,D)$ denote the subset of $L^n$ mapped bijectively to the set of singular points of $\bar{f}$ lying on $\bar{D}$. We define also
\[\nu(\bar{f},D,\chi)=\begin{cases}
q^{-n}Card \left\{\bar{x}\in \bar{D}\ \mathbf{;}\ \bar{f}(\bar{x})\neq 0\right\} & \text{ if } \chi=\chi_{triv},\\
q^{-nc_\chi}\sum\limits_{\{x\in D\ \mathbf{;}\ \bar{f}(\bar{x})\neq 0\} \mod P_K^{c_\chi}}\chi(ac\ f(x)) & \text{ if } \chi\neq\chi_{triv},
\end{cases}\]
and 
\[\sigma(\bar{f},D,\chi)=\begin{cases}
q^{-n}Card \{\bar{x}\in \bar{D}\ \mathbf{;}\ \bar{x} \text{ is a non singular root of } \bar{f}\} & \text{ if } \chi=\chi_{triv},\\
0 & \text{ if } \chi\neq\chi_{triv}.
\end{cases}\]
We also set $Z_D(s,\chi,f)$ for the integral $\int\limits_{D}  \omega(f(x))\ \vert dx\vert=\int\limits_{D}  \chi(ac\ f(x))\ |f(x)|_K^s\ \vert dx\vert.$

\begin{lemma}\label{SPF}
	With the above notations, we have
	\[ Z_D(s,\chi,f)= \nu(\bar{f},D,\chi)+ \sigma(\bar{f},D,\chi)\frac{(1-q^{-1})q^{-s}}{1-q^{-1-s} }+\int\limits_{S(f,D)}  \omega(f(x))\ \vert dx\vert,\] where $Re(s)>0.$
\end{lemma}
\begin{proof}
	The proof given by Igusa in \cite{IBook} for the case of polynomial functions and trivial characters can be easily adapted to the case of arbitrary characters and Laurent polynomials, when $q$ is assumed big enough.
\end{proof}

\begin{theorem}\label{Th2} 
Let $f$ be a Laurent polynomial which is weakly non--degenerate with respect to $\Gamma_{\infty}$ over $\mathbb{F}_q$. Let $\mathcal{C}:=\{Int(\Delta_{\tau}\cap\mathbb{R}_{+}^{n})\ ;\ \Delta_{\tau}\cap\mathbb{R}_{+}^{n}\neq\emptyset\}\bigcup \cup_{i=1}^n\Delta_i$, be the simplicial partition of $\mathbb{R}_{+}^{n}\setminus\{0\}$ subordinated to $\Gamma_{\infty}$. Let $s$ be a complex number such that $\beta<Re(s)<\alpha$, and let $a_{1},\ldots,a_l,$ be the generators of a cone $\Delta\in\mathcal{C}$. Then
\[Z_{0}(s,\chi,f):=Z_{R_K^n}(s,\chi,f)=L_{\Gamma_\infty}(q^{-s},\chi)+\sum\limits_{\Delta\in\mathcal{C}}L_{\Delta}(q^{-s},\chi)S_{\Delta}(q^{-s}),\]
with
\[L_{\Delta}(q^{-s},\chi)=\nu(\overline{f_\Delta},(R_{K}^\times)^{n},\chi)+ \sigma(\overline{f_\Delta},(R_{K}^\times)^{n},\chi)\frac{(1-q^{-1})q^{-s}}{1-q^{-1-s} },\ \text{for any } \Delta\in\mathcal{C},\] and
\[S_{\Delta}\left(  q^{-s}\right)  =\frac{\left(\sum\limits_{h}q^{\left\Vert h\right\Vert +d\left(  h\right)  s}\right)  q^{^{-\sum_{j=1}^{l}\left(  \left\Vert a_{j}\right\Vert +d\left(a_{j}\right)  s\right)  }}}{\prod\limits_{j=1}^{l}\left(  1-q^{-\left\Vert a_{j}\right\Vert -d\left(  a_{j}\right)  s}\right)},\]
where $h$ runs through the elements of the set
\[\mathbb{Z}^{n}\cap\left\{\sum\limits_{j=1}^{l}\lambda_{j}a_{j}\ ;\ 0\leq\lambda_{j}<1\text{ for }j=1,\ldots,l\right\}.\]
\end{theorem}
\begin{proof}
First note that $\mathbb{R}_{+}^{n}=\{0\}\cup\bigsqcup_{\Delta\in\mathcal{C}}\Delta.$ Then
\begin{gather*}
Z_{0}(s,\chi,f)=\sum\limits_{k\in\mathbb{N}^n}\int\limits_{\substack{x\in (R_{K}\setminus\{0\})^n\\ ord (x)=k}} \omega(f(x))\  |dx|\\
=\int\limits_{(R_{K}^\times)^{n}}\omega(f(x))\ \vert dx\vert
+\sum\limits_{\Delta\in\mathcal{C}}\ \sum\limits_{k\in\Delta\cap\mathbb{N}^n}\ \int\limits_{\substack{x\in (R_{K})^n\\ ord (x)=k}}\omega(f(x))\ \vert dx\vert.
\end{gather*}
Now we change variables as 
\begin{equation}\label{change}
x_i=\mathfrak{p}^{k_i}u_i,\text{ with } u_i\in(R_{K}^\times)^{n}, \text{ then }|dx|=q^{-||k||} \text{ and } x^m=\mathfrak{p}^{\langle k,m\rangle}u^m.
\end{equation}
We fix $\Delta\in\mathcal{C}$ and note that for any vector $a\in\Delta,\ f_a(x)$ is the same, so we will use $f_\Delta(x)$ instead of $f_a(x)$. Now we have 
\begin{equation}\label{change1}
f(x)=f_\Delta(x)+\sum\limits_{m\in S\setminus F(a)}c_{m}x^m, \text{ for every }a\in\Delta,
\end{equation}
and $\langle k,m \rangle=d(k)$ for any $m\in F(a)$ whereas 
$\langle k,m \rangle>d(k)$ for $m\in S\setminus F(a)$. From (\ref{change}) and (\ref{change1}) we get
\[f(x)=\mathfrak{p}^{d(k)}f_\Delta(u)+\mathfrak{p}^{d(k)+l}\tilde{f}_{\Delta,k}(u),\]where $l>0$ and 
$\tilde{f}_{\Delta,k}(u)\in R_K^\times[u]$. 
Therefore
\begin{gather*}
Z_{0}(s,\chi,f)=L_{\Gamma_\infty}(q^{-s},\chi)\\
+\sum\limits_{\Delta\in\mathcal{C}}\ \sum\limits_{k\in\Delta\cap\mathbb{N}^n}q^{-||k||-d(k)s} \int\limits_{(R_{K}^\times)^n} \omega(f_\Delta(u)+\mathfrak{p}^{l}\tilde{f}_{\Delta,k}(u))\ |du|,
\end{gather*}
where  $L_{\Gamma_\infty}(q^{-s},\chi)=\int\limits_{(R_{K}^\times)^{n}}\omega( f(x))\ \vert dx\vert$. Now, since $f$ is non degenerate with respect to $a$ (for any $a\in\Delta$) we have that $\overline{f_\Delta}$ has no singular points on $(\mathbb{F}_{q}^\times)^{n},$ thus Lemma \ref{SPF} implies
\begin{equation*}
Z_{0}(s,\chi,f)=L_{\Gamma_\infty}(q^{-s},\chi)
+\sum\limits_{\Delta\in\mathcal{C}}\ L_\Delta(q^{-s},\chi)\ \sum\limits_{k\in\Delta\cap\mathbb{N}^n}q^{-||k||-d(k)s}.
\end{equation*}
Finally, it is a well known fact that $\sum\limits_{k\in\Delta\cap\mathbb{N}^n}q^{-||k||-d(k)s}$ equals $S_\Delta$, see e.g. \cite{DeHoo}.
\end{proof}

\begin{example}
\label{Example2}Set $g(x,y)=x^{-3}+y^{-2}+y^{4}\in {R_K}\left[
x^{\pm 1},y^{\pm 1}\right]  $. We choose $\Phi$ to be the characteristic function of $R_K^2$ and $\chi=\chi_{triv}.$ Furthermore, we will assume that $char(K)$ is different from 2 and 3. Note that $g(x,y)$ is a non-degenerate  polynomial with respect to
$\Gamma_{\infty}\left(  g\right)  $ over $\mathbb{F}_{q}$, for $q$ big enough. 
\begin{figure}[ht]
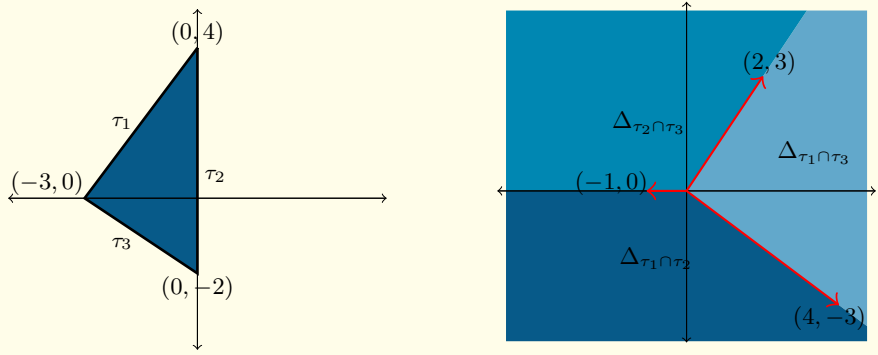
\label{Fig2}	
\begin{multicols}{2}
\begin{pgfpicture}{-2.5cm}{-2cm}{2.5cm}{2.5cm}
        \color{darkblue}
        \pgfmoveto{\pgfxy(0,2)}
        \pgflineto{\pgfxy(0,-1)}
        \pgflineto{\pgfxy(-1.5,0)}
        \pgflineto{\pgfxy(0,2)}
        \pgfclosepath
        \pgffill
        \pgfsetlinewidth{1pt}
        \color{black}
        \pgfmoveto{\pgfxy(0,2)}
        \pgflineto{\pgfxy(0,-1)}
        \pgflineto{\pgfxy(-1.5,0)}
        \pgflineto{\pgfxy(0,2)}
        \pgfstroke
        \pgfsetlinewidth{0.4pt}
        \pgfsetendarrow{\pgfarrowto}
        \pgfxyline(-2.5,0)(2.5,0)
        \pgfxyline(0,-2)(0,2.5)
        \pgfxyline(2.5,0)(-2.5,0)
        \pgfxyline(0,2.5)(0,-2)
                
        \pgfputat{\pgfxy(-1,1)}{\pgfbox[center,center]{\footnotesize $\tau_1$}}
        \pgfputat{\pgfxy(0.1,0.3)}{\pgfbox[left,center]{\footnotesize $\tau_2$}}
        \pgfputat{\pgfxy(-1,-0.6)}{\pgfbox[center,center]{\footnotesize $\tau_3$}}
        \pgfputat{\pgfxy(0,2.2)}{\pgfbox[center,center]{\small{$(0,4)$}}}
        \pgfputat{\pgfxy(-2,0.2)}{\pgfbox[center,center]{\small{$(-3,0)$}}}
        \pgfputat{\pgfxy(0,-1.2)}{\pgfbox[center,center]{\small{$(0,-2)$}}}
        \end{pgfpicture}

\columnbreak
\begin{pgfpicture}{-2.4cm}{-2cm}{2.4cm}{2.4cm}
        \color{lightblue}
        \pgfmoveto{\pgfxy(2.4,-1.8)}
        \pgflineto{\pgfxy(2.4,2.4)}
        \pgflineto{\pgfxy(1.6,2.4)}
        \pgflineto{\pgfxy(0,0)}
        \pgflineto{\pgfxy(2.4,-1.8)}
        \pgfclosepath
        \pgffill
        \color{midblue}
        \pgfmoveto{\pgfxy(1.6,2.4)}
        \pgflineto{\pgfxy(-2.4,2.4)}
        \pgflineto{\pgfxy(-2.4,0)}
        \pgflineto{\pgfxy(0,0)}
        \pgflineto{\pgfxy(1.6,2.4)}
        \pgfclosepath
        \pgffill
        \color{darkblue}
        \pgfmoveto{\pgfxy(0,0)}
        \pgflineto{\pgfxy(-2.4,0)}
        \pgflineto{\pgfxy(-2.4,-2)}
        \pgflineto{\pgfxy(2.4,-2)}
        \pgflineto{\pgfxy(2.4,-1.8)}
        \pgflineto{\pgfxy(0,0)}
        \pgfclosepath
        \pgffill
        \color{black}
        \pgfsetlinewidth{0.4pt}
        \pgfsetendarrow{\pgfarrowto}
        \pgfxyline(-2.5,0)(2.5,0)
        \pgfxyline(0,-2)(0,2.5)
        \pgfxyline(2.5,0)(-2.5,0)
        \pgfxyline(0,2.5)(0,-2)
        \color{red}
        \pgfsetlinewidth{0.8pt}
        \pgfsetendarrow{\pgfarrowto}
        \pgfxyline(0,0)(1,1.5)
        \color{red}
        \pgfsetlinewidth{0.8pt}
        \pgfsetendarrow{\pgfarrowto}
        \pgfxyline(0,0)(-0.5,0)
        \color{red}
        \pgfsetlinewidth{0.8pt}
        \pgfsetendarrow{\pgfarrowto}
        \pgfxyline(0,0)(2,-1.5)
        \color{black}
        \pgfputat{\pgfxy(-0.5,0.9)}{\pgfbox[center,center]{\small $\Delta_{\tau_2\cap\tau_3}$}}
        \pgfputat{\pgfxy(-0.4,-0.9)}{\pgfbox[center,center]{\small $\Delta_{\tau_1\cap\tau_2}$}}
        \pgfputat{\pgfxy(1.7,0.5)}{\pgfbox[center,center]{\small $\Delta_{\tau_1\cap\tau_3}$}}
        \pgfputat{\pgfxy(-1,0.1)}{\pgfbox[center,center]{\small{$(-1,0)$}}}
        \pgfputat{\pgfxy(1.1,1.7)}{\pgfbox[center,center]{\small{$(2,3)$}}}
        \pgfputat{\pgfxy(1.9,-1.7)}{\pgfbox[center,center]{\small{$(4,-3)$}}}
        \end{pgfpicture}
\end{multicols}
\caption{$\Gamma_\infty(x^{-3}+y^{-2}+y^{4})$ and the conical partition of $\mathbb{R}^2$ induced by it.}
\end{figure}

The vectors $\left\{\left(  1,0\right)  ,\left(  2,3\right)  ,\left(  0,1\right)  \right\}  $ are
the edges of a non-trivial simplicial polyhedral subdivision of $\mathbb{R}%
_{+}^{2}$ subordinated to $\Gamma_{\infty}\left(  g\right)  $. Therefore
\[
L_{\Gamma_{\infty}}\left(  q^{-s},\chi_{triv}\right)  =q^{-2}\left\{  (q-1)^{2}+N\left(  \frac{q^{-s}-1}{1-q^{-s-1}}\right)  \right\}  ,
\]
with $N=\left\{  \left(  x,y\right)  \in\left(
\mathbb{F}_{q}^{\times}\right)  ^{2}\mathbf{;}\ x^{-3}+y^{-2}+y^{4}=0\right\}
$. The remaining data for the explicit formula for $Z_0(s, x^{-3}+y^{-2}+y^{4},\chi_{triv})$ are in Table \ref{Tab1}.
\begin{table}\label{Tab1}
\centering
\begin{tabular}
[c]{|c|c|c|c|}\hline
$%
\begin{array}
[c]{c}%
\text{Cone}\\
\text{Generators}%
\end{array}
$ & $\begin{array}
[c]{c}%
\text{Face}%
\end{array}$
 & $L_{\Delta}\left(  q^{-s},\chi_{triv}\right)  $ & $S_{\Delta}\left(
q^{-s}\right)  $\\\hline
$\left(  0,1\right)  $ & $\tau_2\cap\tau_3  $ & $(1-q^{-1})^{2}$ & $\dfrac{q^{-1+2s}}{1-q^{-1+2s}}$\\\hline
$\left(  2,3\right)  $ & $\tau_3$ & $%
\begin{array}
[c]{c}%
q^{-2}(q-1)\{(q-1)+\\
\frac{q^{-s}-1}{1-q^{-s-1}}  \}
\end{array}
$ & $\dfrac{q^{-5+6s}}{1-q^{-5+6s}}$\\\hline
$(1,0)$ & $\tau_1\cap\tau_3  $ & $\left(  1-q^{-1}%
\right)  ^{2}$ & $\dfrac{q^{-1+3s}}{1-q^{-1+3s}}$\\\hline
$\left(  0,1\right)  ,\left(  2,3\right)  $ & $\tau_2\cap\tau_3  $ & $\left(  1-q^{-1}\right)  ^{2}$ & $\dfrac{(1+q^{3-4s})q^{-6+8s}}{(1-q^{-1+2s})(1-q^{-5+6s})}$\\\hline
$\left(  2,3\right)  ,\left(  1,0\right)  $ & $\tau_1\cap\tau_3  $ & $\left(  1-q^{-1}\right)  ^{2}$ & $\dfrac{(1+q^{2-3s}+q^{4-6s})q^{-6+9s}}{(1-q^{-1+3s})(1-q^{-5+6s})}$
\\\hline
\end{tabular}
\caption{$g(x,y)=x^{-3}+y^{-2}+y^{4}$}
\end{table}
It follows that  the real parts of the poles of the local zeta function belong
to $\left\{  \frac{1}{2},\frac{1}{3},\frac{5}{6},-1\right\}  $.  Note that only the poles $\frac{5}{6}%
+\frac{2\pi\sqrt{-1}\mathbb{Z}}{6\log q}$ come from the equation of a
supporting plane, more precisely from the face $\tau_3$. Finally, note that
$\beta=-1$ and $\alpha=\frac{1}{3}$, and that this last datum does not come from the equations of the supporting planes
of $\Gamma_{\infty}\left(g\right)$.
\end{example}

\begin{bibdiv}
	\begin{biblist}

		
		\bib{De}{article}{
			author={Denef, Jan},
			title={Report on Igusa's local zeta function},
			note={S\'eminaire Bourbaki, Vol.\ 1990/91},
			journal={Ast\'erisque},
			number={201-203},
			date={1991},
			pages={Exp.\ No.\ 741, 359--386 (1992)},
		}

		\bib{DeHoo}{article}{
			author={Denef, Jan},
			author={Hoornaert, Kathleen},
			title={Newton polyhedra and Igusa's local zeta function},
			journal={J. Number Theory},
			volume={89},
			date={2001},
			number={1},
			pages={31--64},
		}
		

		
		\bib{Hoo}{article}{
			author={Hoornaert, Kathleen},
			title={Newton polyhedra and the poles of Igusa's local zeta function},
			journal={Bull. Belg. Math. Soc. Simon Stevin},
			volume={9},
			date={2002},
			number={4},
			pages={589--606},
		}
		

		\bib{IBook}{book}{
			author={Igusa, Jun-ichi},
			title={An introduction to the theory of local zeta functions},
			series={AMS/IP Studies in Advanced Mathematics},
			volume={14},
			publisher={American Mathematical Society, Providence, RI; International
				Press, Cambridge, MA},
			date={2000},
			pages={xii+232},
		}
		
		\bib{LeZu}{article}{
			author={Le{\'o}n-Cardenal, E.},
			author={Z{\'u}{\~n}iga-Galindo, W. A.},
			title={Local zeta functions for non-degenerate Laurent polynomials over
				$p$-adic fields},
			journal={J. Math. Sci. Univ. Tokyo},
			volume={20},
			date={2013},
			number={4},
			pages={569--595},
		}
		
		\bib{LeZu2}{article}{
			author={Le{\'o}n-Cardenal, E.},
			author={Z{\'u}{\~n}iga-Galindo, W. A.},
			title={Erratum to Local Zeta Functions for Non-degenerate Laurent Polynomials Over p-adic Fields},
			journal={J. Math. Sci. Univ. Tokyo},
			note={Accepted for publication},
			date={2015},
		}

		\bib{LeVeZu}{article}{
			author={Le{\'o}n-Cardenal, E.},
			author={Veys, Willem},
			author={Z{\'u}{\~n}iga-Galindo, W. A.},
			title={Poles of Archimedean zeta functions for analytic mappings},
			journal={J. Lond. Math. Soc. (2)},
			volume={87},
			date={2013},
			number={1},
			pages={1--21},
		}
		
		\bib{Nic}{article}{
			author={Nicaise, Johannes},
			title={An introduction to $p$-adic and motivic zeta functions and the
				monodromy conjecture},
			conference={
				title={Algebraic and analytic aspects of zeta functions and
					$L$-functions},
			},
			book={
				series={MSJ Mem.},
				volume={21},
				publisher={Math. Soc. Japan, Tokyo},
			},
			date={2010},
			pages={141--166},
		}
		
		\bib{Var}{article}{
			author={Var{\v{c}}enko, A. N.},
			title={Newton polyhedra and estimates of oscillatory integrals},
			language={Russian},
			journal={Funkcional. Anal. i Prilo\v zen.},
			volume={10},
			date={1976},
			number={3},
			pages={13--38},
		}
		
		\bib{VeZu}{article}{
			author={Veys, Willem},
			author={Z{\'u}{\~n}iga-Galindo, W. A.},
			title={Zeta functions for analytic mappings, log-principalization of
				ideals, and Newton polyhedra},
			journal={Trans. Amer. Math. Soc.},
			volume={360},
			date={2008},
			number={4},
			pages={2205--2227},
		}
		
		\bib{VeZu2}{article}{
			author={Veys, Willem},
			author={Z{\'u}{\~n}iga-Galindo, W. A.},
			title={Zeta functions and oscillatory integrals for meromorphic functions},
			eprint={arXiv:1510.03622},
			date={2015},
			pages={1--36},
		}
		
%
		\bib{Zu}{article}{
			author={Zuniga-Galindo, W. A.},
			title={Local zeta functions and Newton polyhedra},
			journal={Nagoya Math. J.},
			volume={172},
			date={2003},
			pages={31--58},
		}

	\end{biblist}
\end{bibdiv}

\end{document}